\newtheorem{theorem}{Theorem}
\newtheorem{observation}[theorem]{Observation}
\newtheorem{proposition}[theorem]{Proposition}
\newtheorem{corollary}[theorem]{Corollary}
\newtheorem{conjecture}[theorem]{Conjecture}
\begin{document}

\title{Minimum Weighted Szeged Index Trees}

\author{
Pavol Hell
\and
C\'{e}sar Hern\'{a}ndez-Cruz
\and
Seyyed Aliasghar Hosseini
}

\date{\today}

\maketitle

\begin{abstract}
Weighted Szeged index is a recently introduced extension of the well-known Szeged index. In this paper, we present a new tool to analyze and characterize minimum weighted Szeged index trees. We exhibit the best trees with up to 81 vertices and use this information, together with our results, to propose various conjectures on the structure of
minimum weighted Szeged index trees.
\end{abstract}

\section{Introduction}
Molecular descriptors \cite{ref1} are mathematical quantities that describe the structure or shape of molecules, helping to predict the activity and properties of molecules in complex experiments. In the last few years, several new molecular structure descriptors have been conceived \cite{ref2,ref3,ref4,ref5}.  Molecular descriptors play a significant role in chemistry, pharmacology, etc. Topological indices have a prominent place among molecular structure descriptors. Modeling physical and chemical properties of molecules, designing  pharmacologically active compounds and recognizing environmentally hazardous materials are just a few applications of topological indices, see~\cite{ref6}.  One of the most important topological indices is the \emph{Szeged index}. The Szeged index of a connected graph $G$, $Sz(G)$, is defined as
$$Sz(G) = \sum_{e = uv\in E(G)} n_u^G(e)n_v^G(e),$$
where $n_u^G(e)$ is the number of vertices in $G$ that are closer to $u$ than $v$.
Similarly, the \emph{weighted Szeged index} of a connected graph $G$, $wSz(G)$, introduced
by Ili\'{c} and Milosavljevi\'{c} in \cite{ilic}, is defined as
$$wSz(G) = \sum_{e = uv\in E(G)} \left(d_G(u)+d_G(v)\right)n_u^G(e)n_v^G(e),$$
where $d_G(u)$ is the degree of $u$ in $G$.

Recently some researches have studied the weighted
Szeged index of special classes of graphs and graph
operations \cite{jan,ref10,ref11,ref12}. One of the
main related open problems is characterizing a graph
of fixed order with minimum weighted Szeged index.
Regarding this problem, if true, the following
conjecture would substantially restrict the structure
of such graphs.

\begin{conjecture}\cite{jan} \label{min-tree}
For $n$-vertex graphs the minimum weighted Szeged
index is attained by a tree.
\end{conjecture}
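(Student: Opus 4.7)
The plan is to proceed by contradiction: let $G$ be a connected $n$-vertex graph of minimum $wSz$, and assume $G$ is not a tree, so some edge $e=xy$ lies on a cycle of $G$. The aim is to exhibit such an $e$ with $wSz(G-e) < wSz(G)$, contradicting minimality. Since $e$ lies on a cycle, $G-e$ is still connected, so the comparison makes sense.

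First I would decompose $\Delta(e) := wSz(G) - wSz(G-e)$ into three pieces. The first is the vanishing contribution of $e$ itself to $wSz(G)$, namely $(d_G(x)+d_G(y))\, n_x^G(e)\, n_y^G(e)$, which is strictly positive since $d_G(x), d_G(y) \geq 2$. The second is a degree-drop term: for each edge $f = uv \neq e$ incident to $x$ or to $y$, the degree sum $d(u)+d(v)$ decreases by one when $e$ is removed, producing a nonnegative contribution $n_u^{G-e}(f)\, n_v^{G-e}(f)$ to $\Delta(e)$. The third piece is the Szeged shift
$$S(e) := \sum_{f=uv \in E(G-e)} (d_G(u)+d_G(v))\bigl(n_u^G(f)\,n_v^G(f) - n_u^{G-e}(f)\,n_v^{G-e}(f)\bigr),$$
which may have either sign. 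Since the first two pieces are manifestly nonnegative, the task reduces to showing that $S(e)$ is not too negative compared with them.

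A natural first target is the unicyclic case. If $G$ is unicyclic with cycle $C$ and $e \in E(C)$, every other edge is a bridge, so $n_u(f)+n_v(f) = n$ in both $G$ and $G-e$ for every bridge $f$, and only the cycle edges and their immediate neighborhoods can contribute nontrivially to $S(e)$. One then picks $e \in E(C)$ so that deletion best balances the two trees hanging off its endpoints, exploiting that on a cycle $n_x^G(e)+n_y^G(e) < n$ (equidistant vertices exist), so the first term of $\Delta(e)$ is already substantial. The general case can then be attacked by induction on the cyclomatic number $|E(G)|-n+1$, with the same local-deletion step as inductive lever.

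The main obstacle is controlling $S(e)$ in the general case. Removing a single edge can, for a distant $f = uv$, redistribute vertices among the sets ``closer to $u$,'' ``closer to $v$,'' and ``equidistant from both,'' and in principle the product $n_u(f)\,n_v(f)$ can strictly grow. My plan is to average $\Delta(e)$ over all edges of a shortest cycle $C$ in $G$, with the hope that the signed Szeged shifts partially cancel by a cyclic symmetry argument, leaving the nonnegative pieces dominant on average and hence $\Delta(e) > 0$ for at least one $e \in E(C)$. If averaging is inadequate, a fallback is to choose $e$ on a chordless cycle minimizing an auxiliary distance invariant and show that the vertices whose closeness to $\{u,v\}$ actually shifts under removal of $e$ lie in a localized neighborhood of $e$, keeping the adverse part of $S(e)$ small enough to be absorbed by the guaranteed positive contributions.
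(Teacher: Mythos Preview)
The statement you are trying to prove is labeled a \emph{conjecture} in the paper, cited from~\cite{jan}, and the paper offers no proof of it. On the contrary, the authors explicitly remark that ``it is not easy to prove if the graph with minimum weighted Szeged index on a fixed number of vertices is a tree,'' and the entire paper is devoted to studying the structure of optimal \emph{trees} under the assumption (or hope) that Conjecture~\ref{min-tree} holds. So there is no ``paper's own proof'' to compare against; you are attempting an open problem.

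As for your outline itself: the decomposition of $\Delta(e)$ into the vanished edge term, the degree-drop term, and the Szeged shift $S(e)$ is correct and natural, and your observation that bridges contribute nothing to $S(e)$ in the unicyclic case is right. But the genuine difficulty is exactly where you say it is, and your proposal does not overcome it. For a cycle edge $f\neq e$, deleting $e$ turns $G$ into a tree, so $n_u^{G-e}(f)+n_v^{G-e}(f)=n$, whereas in $G$ equidistant vertices can make $n_u^G(f)+n_v^G(f)$ much smaller; thus $n_u(f)n_v(f)$ can jump by order $n^2$ on \emph{every} remaining cycle edge, while your guaranteed positive term $(d_G(x)+d_G(y))n_x^G(e)n_y^G(e)$ is a single quantity of the same order. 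The averaging idea over edges of a shortest cycle does not produce cancellation in $S(e)$: each deletion makes \emph{all} other cycle edges' products increase, so the adverse contributions do not come with mixed signs. Likewise, the ``localization'' fallback is unsupported --- on a long cycle with balanced pendant trees the set of vertices whose relative closeness shifts is not local at all. In short, what you have is a reasonable first framing of the problem, but not a proof, and the paper does not claim one either.
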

Unlike many other topological indices, for example
ABC-index \cite{ABCJCTB,krag_tree}, it is not easy
to prove if the graph with minimum weighted Szeged
index on a fixed number of vertices is a tree.
In \cite{jan} some properties of minimum weighted
Szeged index trees are described and the list of
minimum trees on at most $25$ vertices is presented.

We refer the reader to \cite{bondy2008} for basic
graph theory terminology.   Throughout this work,
we will use $n$ to denote the number of vertices
of a graph.   The rest of the work is organized
as follows.   In Section \ref{sec:ending} we
introduce the concept of ending branch, present
some basic results about their structure and
use these results to determine the structure of
ending branches for some orders.   We present
computational results on the structure of optimal
trees for the weighted Szeged index, as well as
the structure of optimal ending branches in
Section \ref{sec:computational}.   We cover
trees up to 81 vertices; it is not hard to
computationally push this bound, but we only
intend to present the readers with a dataset
that might give a fair idea of what is happening
with these structures, and not giving the most
complete list possible.  Finally, we present
conclusions and some conjectures based on our
observations in Section \ref{sec:conc}.

\section{Ending Branches}
\label{sec:ending}
Let $T$ be a tree with the smallest weighted Szeged index on a fixed number of vertices and let $R$ be a vertex of highest degree in $T$. Note that removing any edge from $T$ results in two connected components. We will call the the one that does not contain $R$ an \emph{ending branch}. In this section we will study the ending branches that result in small weighted Szeged index trees. By the weighted Szeged index of an ending branch we mean the sum of the weighted Szeged index of all edges in the ending branch along with the half-edge connecting the ending branch to the rest of the tree. Note that to calculate the weighted Szeged index of an ending branch, it is sufficient to have the total number of vertices of the graph besides the structure of the ending branch. The following proposition clarifies the idea.

\begin{proposition}
Let $n$ be a positive integer and let $T$ be a tree with the
smallest weighted Szeged index on $n$ vertices.  If $uv$ is
an edge of $T$ and $v$ is the root of the associated
ending branch, then the weighted Szeged index of the ending
branch does not depend on the degree $d_u$ of $u$.
\end{proposition}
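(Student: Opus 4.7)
The plan is to make explicit what the ``half-edge'' convention must be and then observe that every term in the resulting formula for the weighted Szeged index of the ending branch is manifestly independent of $d_u$.

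First, I would unfold the definition. Let $B$ denote the ending branch rooted at $v$, viewed as the subtree of $T$ induced by $v$ together with everything on $v$'s side of the edge $uv$. For every edge $xy$ lying strictly inside $B$, its contribution to $wSz(B)$ is the usual $(d_T(x)+d_T(y))\,n_x^T(xy)\,n_y^T(xy)$. For the bridge $uv$, the natural ``half-edge'' contribution --- the one consistent with splitting the full term $(d_u+d_v)\,n_u\,n_v$ appearing in $wSz(T)$ into the two pieces indexed by its endpoints --- is $d_T(v)\cdot n_u\cdot n_v$, while the complementary piece $d_u\cdot n_u\cdot n_v$ is charged to the other side of $T$. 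This is the only place in the sum where $d_u$ could appear, since $u$ itself is not a vertex of $B$.

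Next I would verify that every interior term depends only on $n$ and the structure of $B$. For the degrees: every vertex $x\in V(B)\setminus\{v\}$ satisfies $d_T(x)=d_B(x)$, and $d_T(v)=d_B(v)+1$, so degrees in interior edges are read off directly from $B$. For the vertex counts: removing an interior edge $e=xy$ of $B$ from $T$ produces two components, one of which is a subtree of $B$ whose size is determined by the structure of $B$, while the other has size $n$ minus that, which is determined by $n$ and $B$. Consequently $n_x^T(e)\,n_y^T(e)$ depends only on $n$ and $B$, and so does the whole interior contribution. Finally, for the half-edge: $d_T(v)$ and $n_v=|V(B)|$ are fixed by $B$ and $n_u=n-n_v$ is then fixed as well, so the term $d_T(v)\,n_u\,n_v$ contains no occurrence of $d_u$.

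The main obstacle is not really mathematical; it is expository, namely to pin down the ``half-edge'' convention so that the statement becomes meaningful. Once this is done, the proposition follows immediately because, term by term, the formula for $wSz(B)$ is a function of $n$ and of the ending branch $B$ alone, with no room for the parameter $d_u$ to enter.
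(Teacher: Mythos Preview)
Your proof is correct. You make the half-edge convention explicit---assigning $d_T(v)\,n_u n_v$ to the branch and the complementary $d_u\,n_u n_v$ to the other side---and then check term by term that everything in $wSz(B)$ is determined by $n$ and the structure of $B$, so that $d_u$ simply never appears. This directly establishes the proposition as stated, and it matches the convention the paper actually uses in its later numerical computations.

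The paper's own argument takes a slightly different route: rather than fixing the half-edge convention up front, it considers two ending branches of the same size, both attached at $u$, and computes the difference of the bridge contributions $(d_u+d_v)n_u n_v-(d_u+d_{v'})n_u n_v=(d_v-d_{v'})n_u n_v$, noting that $d_u$ cancels. So the paper really proves that \emph{comparisons} among same-sized ending branches are independent of $d_u$, which is precisely what is needed to speak of a ``minimal ending branch'' of a given size. Your approach is the cleaner proof of the proposition as written, since $d_u$ is excised from the formula outright; the paper's comparison argument has the minor advantage of showing that even if one kept the full $(d_u+d_v)n_u n_v$ term on the branch side, the ranking of ending branches would be unchanged.
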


\begin{proof}
Consider two edges $uv$ and $uv'$ of $T$ and
suppose that the associated ending branches with roots $v$
and $v'$ have the same number of vertices.
The difference between the weighted Szeged index of these two graphs will depend on $d_u$ only on the edge $uv$ which is
$$(d_u+d_v)n_u^v n_v^u - (d_u+d_{v'})n_u^{v'} n_{v'}^u,$$
and if the number of vertices in the ending branch is the same then it simplifies to
$$(d_v-d_{v'})n_u^v n_v^u,$$
which does not depend on $d_u$, as claimed.
\end{proof}

Now we can compare the weighted Szeged index of different ending branches. By a {\em minimal ending branch on $k$ vertices} we mean the ending branch that has the smallest weighted Szeged index among all possible ending branches of $k$ vertices.

\begin{corollary}\label{cor:end}
In any minimum weighted Szeged index tree all ending branches are also minimal.
\end{corollary}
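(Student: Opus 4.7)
The plan is to argue by contradiction, directly leveraging the preceding proposition. Suppose $T$ is a minimum weighted Szeged index tree on $n$ vertices, and that some ending branch $B$ of $T$, rooted at $v$ and attached to a vertex $u$ through the edge $uv$, fails to be minimal among ending branches on $|V(B)|$ vertices. Fix a minimal ending branch $B^{*}$ on $|V(B)|$ vertices with root $v^{*}$, and form $T'$ from $T$ by deleting the vertices of $B$ and reattaching $B^{*}$ to $u$ via a new edge $uv^{*}$. Since $|V(B)| = |V(B^{*})|$, $T'$ is a tree on $n$ vertices, and it suffices to derive $wSz(T') < wSz(T)$.

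The heart of the argument is to show that $wSz(T') - wSz(T)$ equals the difference between the ending-branch weighted Szeged indices of $B^{*}$ and $B$ defined above. I would proceed edge by edge, partitioning the edges of $T$ and of $T'$ into three groups: those wholly outside the branch, the attachment edge, and those wholly inside the branch. For an edge $e = xy$ outside the branch, both endpoints retain their degrees (in particular $d_u$ is unchanged), and the Szeged counts $n_x^{T}(e)$ and $n_y^{T}(e)$ are preserved as well, because the only vertices whose side of $e$ could have changed are those of the branch, and they remain grouped on one side with the same total count $|V(B)|=|V(B^{*})|$. Hence all edges outside the branch contribute identically in $T$ and in $T'$, and the entire change is concentrated on the attachment edge together with the edges inside the branch---which is precisely what the ending-branch weighted Szeged index was defined to capture.

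The proposition now supplies the final ingredient: this remaining contribution is independent of $d_u$, so the comparison between $B$ and $B^{*}$ is unambiguous, and $wSz(T') - wSz(T) = wSz(B^{*}) - wSz(B) < 0$ by the assumed non-minimality of $B$. This contradicts the minimality of $T$, yielding the corollary. The main technical obstacle is the edge-by-edge bookkeeping in the middle step; once it is verified that every edge outside the branch is untouched by the swap, the proposition closes the argument almost automatically.
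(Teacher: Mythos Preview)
Your argument is correct and is precisely the exchange argument the paper has in mind; the paper states the corollary without proof, treating it as immediate from the preceding proposition, and your write-up simply makes explicit the swap $B \mapsto B^{*}$ and the edge-by-edge bookkeeping that justifies it.
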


This allows us to focus on the best ending branches of a given size in a tree.

\begin{observation}
There is only one possible ending branch of size 1 and one of size 2.
\end{observation}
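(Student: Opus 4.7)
The plan is to simply enumerate the trees on one and two vertices and check that the root choice does not produce additional cases. Since an ending branch is a rooted subtree (the root being the vertex $v$ incident to the half-edge that connects the branch to the rest of $T$), I need to count \emph{rooted} trees of each specified size, not just abstract trees.

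For size $1$, I would observe that the only tree on one vertex is the isolated vertex, and necessarily this vertex is the root; there is no further choice to make, so there is exactly one ending branch. For size $2$, I would note that the only tree on two vertices is the path $P_2$ consisting of a single edge. Picking a root amounts to distinguishing one of the two endpoints, but the graph automorphism swapping them shows these two choices yield isomorphic rooted trees, hence a single ending branch up to isomorphism.

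The main (and essentially only) conceptual step is the clarification that two ending branches should be considered identical when they are isomorphic as rooted trees, since the weighted Szeged index depends only on the isomorphism type (by the previous proposition, it does not even depend on the degree of $u$, the vertex outside the branch adjacent to the root). I do not anticipate any real obstacle; the observation is essentially a sanity check that sets up the notation for later when the number of ending branches of a given size grows and a genuine enumeration or optimization becomes necessary.
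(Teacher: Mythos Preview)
Your proposal is correct and matches the paper's treatment: the paper simply records this observation without a written argument, accompanying it only with a figure displaying the unique ending branches of sizes~1 and~2. Your explicit enumeration of rooted trees on one and two vertices is exactly the (trivial) verification behind that figure.
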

\begin{figure}[htb]
    \centering
    \includegraphics{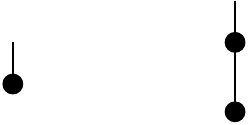}
    \caption{Ending branches of size 1 and 2.}
    \label{fig:1-2}
\end{figure}
Let $T_i$ be a best ending branch of size $i$. For $T_1$ and $T_2$ we have:
$$wSz(T_1)=n-1,$$
$$wSz(T_2)=(n-1) + 2\times 2 \times (n-2) = 5n-9.$$

Figure \ref{fig:3} shows the two  possible ending branches of size 3.
\begin{figure}[htb]
    \centering
    \includegraphics{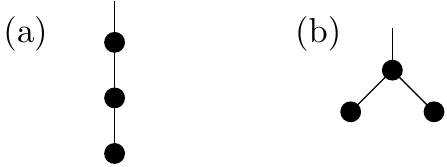}
    \caption{Possible ending branches of size 3.}
    \label{fig:3}
\end{figure}
\begin{proposition}
The 3-ray is a better ending branch than a vertex of degree 3 with 2 leaves attached to it, i.e. in Figure \ref{fig:3}, (a) is smaller than (b).
\end{proposition}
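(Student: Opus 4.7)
The plan is a direct head-to-head evaluation of the two candidates. By the preceding proposition, when comparing two ending branches of the same order, the contribution of $d_u$ to the half-edge $uv$ is identical on both sides and cancels; so it suffices to sum the term $d_v\, n_u^v n_v^u$ on the half-edge together with the ordinary weighted Szeged contributions $(d_x + d_y)\, n_x^y n_y^x$ on each internal edge of the branch.

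First I would fix the root $v$ and read off the data for each picture. For (a), the $3$-ray $v$--$v_1$--$v_2$, the relevant degrees are $d_v = 2$, $d_{v_1} = 2$, $d_{v_2} = 1$, and the vertex-splits on the three edges (half-edge $uv$, then $vv_1$, then $v_1 v_2$) are $(n-3,3)$, $(n-2,2)$, $(n-1,1)$. For (b), with $v$ adjacent to two leaves $v_1, v_2$, the degrees become $d_v = 3$, $d_{v_1}=d_{v_2}=1$, and the splits are $(n-3,3)$, $(n-1,1)$, $(n-1,1)$.

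Summing each expression and subtracting yields an affine polynomial in $n$, but I expect the linear part to cancel: both branches have three vertices, so the half-edge split $(n-3,3)$ is the same, and the shifts of weight between internal edges and the half-edge balance out in the coefficient of $n$. What should survive is a small positive constant, showing that (a) beats (b) by a fixed margin \emph{independent of $n$}, which is the content of the proposition. Morally, moving a leaf off of $v$ to hang one step further out decreases $d_v$ from $3$ to $2$, and the resulting saving on the half-edge (which is scaled by the large factor $(n-3)\cdot 3$) outweighs the extra cost of the new internal edge (scaled by the small factor $(n-1)\cdot 1$).

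There is no real obstacle here; the argument is bookkeeping. The only care needed is (i) to invoke the previous proposition to drop the $d_u$ summand consistently on both sides, and (ii) to verify that both candidates produce the same $(n_u^v, n_v^u)$ split on $uv$, which is immediate since both branches have three vertices.
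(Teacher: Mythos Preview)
Your proposal is correct and matches the paper's proof: a direct computation of the weighted Szeged contribution of each branch (dropping the $d_u$ part of the half-edge via the previous proposition), yielding $wSz(T_a)=17n-37$ and $wSz(T_b)=17n-35$, so (a) beats (b) by the constant $2$ for all $n$. Your anticipation that the linear terms cancel and only a positive constant survives is exactly what happens.
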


\begin{proof}
Let $n$ be the number of vertices of $T$, then we have:
$$wSz(T_a)=(1+2)(n-1)(1)+(2+2)(n-2)(2) + 2(n-3)(3)=17n-37,$$
$$wSz(T_b)=2(1+3)(n-1)(1)+3(n-3)(3)=17n-35.$$
Therefore for any $n$ we  have that $wSz(T_a)<wSz(T_b)$.
\end{proof}

Note that $wSz(T_a)$ and $wSz(T_b)$ can be rephrased as
$$wSz(T_a)=2\times3\times(n-3)+ 2\times 2 \times (n-2) + wSz(T_2),$$
$$wSz(T_b)=3\times3\times(n-3) + 2\times 3\times (n-1) + 2wSz(T_1).$$

This idea can be generalized as follows. Let $v$ be the root of an ending branch and let $x_1, x_2, \ldots, x_k$ be its children. Also let $n_i$ be the size of the ending branch with root $x_i$ (for $i \in \{ 1, \ldots, k \}$). Then the best ending branch with root $v$ has the following weighted Szeged index:
$$wSz(T_v)=d_v\times n_v \times (n-n_v) + \sum_{i=1}^k d_v\times n_i\times(n-n_i) + \sum_{i=1}^k wSz(T_{x_i}),$$
where $n_v$ is the total number of vertices in this ending branch.
Observe that $\sum_{i=1}^k n_i =n-1$, and for every partition of $n-1$ into integers we will get a new ending branch. We need to compare all these branches to find the best ending branch on $n_v$ number of vertices.
\begin{figure}[htb]
    \centering
    \includegraphics{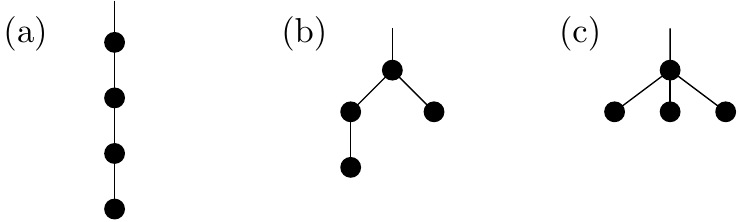}
    \caption{Possible ending branches of size 4.}
    \label{fig:4}
\end{figure}
As an example, to find the best ending branch on 4 vertices we need to find all partitions of 3 and we have $3=3 , 3=2+1, 3=1+1+1$. See Figure \ref{fig:4}. We have:
$$wSz(T_{3})=2\times 4\times (n-4) + 2\times 3\times (n-3) + 17n-35 = 31n-85,$$
$$wSz(T_{2+1}) = 3\times 4\times (n-4) + 3\times 2\times (n-2) + 3\times (n-1) + 5n-9 + n-1 = 27n - 73,$$
$$wSz(T_{1+1+1})=4\times4\times(n-4) + 3\times 4\times(n-1) + 3\times (n-1) = 31n-79.$$
Now it is easy to see that $wSz(T_{2+1})<wSz(T_3)<wSz(T_{1+1+1})$ for $n>6$. Therefore, in any minimal weighted Szeged index tree of size more than 6, if an ending branch of size 4 occurs, it is going to be $T_{2+1}$.

\section{Computational Results}
\label{sec:computational}

In this section we will use Corollary \ref{cor:end} to find minimum ending branches of higher order, say $n_v$, when the tree has $n$ vertices. Let us assume that we know all the minimum ending branches of order up to $n_v-1$.

Any ending branch has a root, say $v$, and the sum of the order of its children is $n_v-1$.
Basically, the order of children of $v$ is a partition of $n_v-1$. So we need to go over all possible partitions of $n_v-1$ and since we know the best ending branches of order up to $n_v-1$ we can just calculate and compare the best ending branch of order $n_v$.
This is when we have a fixed $n$. In general, when $n$ is a variable, for each partitioning of $n_v-1$ into integers we get a linear equation and to compare linear equations we sometimes need a bound.

Here we present the result of our computations. Table \ref{tab:branch} shows the best ending branches of size $n_v$. When $n$ is at least a specific value, then $v$ has $d_v$ children of shown order. Using the same approach we can find the minimal weighted Szeged index trees as well. Table \ref{tab:tree} shows the minimum weighted Szeged index trees of order up to 81.

Based on this calculation and as an example, the minimum weighted Szeged index tree on 67 vertices has a root of degree 4 and the root has three children of  order 16, one child of order 18. A minimum ending branch on 16 vertices (when there are more than 18 vertices in the tree) has three children of order 5. A minimal ending branch of order 5 (when there are more than 6 vertices in the tree) has two children of order 2 and there is one possible ending branch of order 2, shown in Figure \ref{fig:1-2}. A drawing of the minimum weighted Szeged index tree on 67 vertices is shown in Figure \ref{fig:5}.

%\newpage
%\begin{multicols}{2}
\small{
\begin{longtable}{| c | c | c | l |}
 \hline
 $n_v$ & $n\geq$ & $d_v$ & children \\ [0.5ex]
 \hline\hline
2 & 2 & 1 & 1 \\
\hline
3 & 3 & 1 & 2 \\
\hline
4 & 6 & 2 & 1, 2 \\
\hline
5 & 6 & 2 & 2, 2 \\
\hline
6 & 8 & 2 & 2, 3 \\
\hline
7 & 9 & 3 & 2, 2, 2 \\
\hline
8 & 14 & 3 & 2, 2, 3 \\
\hline
9 & 11 & 2 & 3, 5 \\
\hline
10 & 14 & 3 & 3, 3, 3 \\
\hline
10* & 14 & 2 & 4, 5 \\
\hline
11 & 13 & 2 & 5, 5 \\
\hline
12 & 15 & 3 & 3, 3, 5 \\
\hline
13 & 18 & 3 & 3, 4, 5 \\
\hline
14 & 17 & 3 & 3, 5, 5 \\
\hline
15 & 18 & 3 & 4, 5, 5 \\
\hline
16 & 18 & 3 & 5, 5, 5 \\
\hline
17 & 20 & 3 & 5, 5, 6 \\
\hline
18 & 22 & 3 & 5, 5, 7 \\
\hline
19 & 25 & 3 & 5, 6, 7 \\
\hline
20 & 32 & 3 & 5, 7, 7 \\
\hline
21 & 70 & 3 & 6, 7, 7 \\
\hline
22 & 45 & 3 & 7, 7, 7 \\
\hline
23 & 51 & 3 & 7, 7, 8 \\
\hline
24 & 48 & 3 & 7, 7, 9 \\
\hline
25 & 49 & 3 & 7, 7, 10 \\
\hline
26 & 39 & 3 & 7, 7, 11 \\
\hline
27 & 48 & 3 & 7, 8, 11 \\
\hline
28 & 47 & 3 & 7, 9, 11 \\
\hline
29 & 50 & 3 & 7, 10, 11 \\
\hline
30 & 41 & 3 & 7, 11, 11 \\
\hline
31 & 45 & 3 & 8, 11, 11 \\
\hline
32 & 44 & 3 & 9, 11, 11 \\
\hline
33 & 44 & 3 & 10, 11, 11 \\
\hline
34 & 42 & 3 & 11, 11, 11 \\
\hline
35 & 54 & 3 & 11, 11, 12 \\
\hline
36 & 55 & 3 & 11, 11, 13 \\
\hline
37 & 54 & 3 & 11, 11, 14 \\
\hline
38 & 55 & 3 & 11, 11, 15 \\
\hline
39 & 45 & 3 & 11, 11, 16 \\
\hline
40 & 54 & 3 & 11, 12, 16 \\
\hline
41 & 55 & 3 & 11, 13, 16 \\
\hline
42 & 54 & 3 & 11, 14, 16 \\
\hline
43 & 55 & 3 & 11, 15, 16 \\
\hline
44 & 50 & 3 & 11, 16, 16 \\
\hline
45 & 66 & 3 & 12, 16, 16 \\
\hline
46 & 56 & 3 & 13, 16, 16 \\
\hline
47 & 55 & 3 & 14, 16, 16 \\
\hline
48 & 55 & 3 & 15, 16, 16 \\
\hline
49 & 56 & 3 & 16, 16, 16 \\
\hline
50 & 65 & 3 & 16, 16, 17 \\
\hline
51 & 64 & 3 & 16, 16, 18 \\
\hline
52 & 72 & 3 & 16, 16, 19 \\
\hline
53 & 83 & 3 & 16, 16, 20 \\
\hline
54 & 98 & 3 & 16, 16, 21 \\
\hline
55 & 121 & 3 & 16, 16, 22 \\
\hline
56 & 125 & 3 & 16, 17, 22 \\
\hline
57 & 165 & 3 & 16, 18, 22 \\
\hline
58 & 254 & 3 & 16, 19, 22 \\
\hline
59 & 506 & 3 & 16, 20, 22 \\
\hline
60 & 66 & 4 & 11, 16, 16, 16 \\
\hline
61 & 67 & 4 & 12, 16, 16, 16 \\
\hline
62 & 68 & 4 & 13, 16, 16, 16 \\
\hline
63 & 70 & 4 & 14, 16, 16, 16 \\
\hline
64 & 71 & 4 & 15, 16, 16, 16 \\
\hline
65 & 71 & 4 & 16, 16, 16, 16 \\
\hline
66 & 72 & 4 & 16, 16, 16, 17 \\
\hline
67 & 73 & 4 & 16, 16, 16, 18 \\
\hline
68 & 77 & 4 & 16, 16, 16, 19 \\
\hline
69 & 78 & 4 & 16, 16, 16, 20 \\
\hline
70 & 79 & 4 & 16, 16, 16, 21 \\
\hline
71 & 80 & 4 & 16, 16, 16, 22 \\
\hline
72 & 82 & 4 & 16, 16, 17, 22 \\
\hline
73 & 84 & 4 & 16, 16, 18, 22 \\
\hline
74 & 87 & 4 & 16, 16, 19, 22 \\
\hline
75 & 93 & 4 & 16, 16, 20, 22 \\
\hline
76 & 100 & 4 & 16, 16, 21, 22 \\
\hline
77 & 99 & 4 & 16, 16, 22, 22 \\
\hline
78 & 107 & 4 & 16, 17, 22, 22 \\
\hline
79 & 117 & 4 & 16, 18, 22, 22 \\
\hline
80 & 128 & 4 & 16, 19, 22, 22 \\
\hline
\caption{Minimal Ending branches}
\label{tab:branch}
\end{longtable}
}
%\end{multicols}

%\newpage

%\begin{multicols}{2}
 \begin{longtable}{| c | c | l |}
 \hline
 $n$ & $d_R$ & children \\ [0.5ex]
 \hline\hline
2 & 1 & 1 \\
\hline
3 & 1 & 2 \\
\hline
4 & 2 & 1, 2 \\
\hline
5 & 2 & 2, 2 \\
\hline
6 & 3 & 1, 2, 2 \\
\hline
7 & 3 & 2, 2, 2 \\
\hline
8 & 3 & 2, 2, 3 \\
\hline
9 & 4 & 2, 2, 2, 2 \\
\hline
10 & 3 & 2, 2, 5 \\
\hline
11 & 3 & 2, 3, 5 \\
\hline
12 & 4 & 2, 2, 2, 5 \\
\hline
13 & 3 & 2, 5, 5 \\
\hline
14 & 3 & 3, 5, 5 \\
\hline
15 & 4 & 3, 3, 3, 5 \\
\hline
15* & 3 & 4, 5, 5 \\
\hline
16 & 3 & 5, 5, 5 \\
\hline
17 & 4 & 3, 3, 5, 5 \\
\hline
18 & 4 & 2, 5, 5, 5 \\
\hline
18* & 4 & 3, 4, 5, 5 \\
\hline
19 & 4 & 3, 5, 5, 5 \\
\hline
20 & 4 & 4, 5, 5, 5 \\
\hline
21 & 4 & 5, 5, 5, 5 \\
\hline
22 & 4 & 5, 5, 5, 6 \\
\hline
23 & 4 & 5, 5, 5, 7 \\
\hline
24 & 5 & 3, 5, 5, 5, 5 \\
\hline
25 & 5 & 4, 5, 5, 5, 5 \\
\hline
26 & 5 & 5, 5, 5, 5, 5 \\
\hline
27 & 5 & 5, 5, 5, 5, 6 \\
\hline
28 & 5 & 5, 5, 5, 5, 7 \\
\hline
29 & 5 & 5, 5, 5, 6, 7 \\
\hline
30 & 4 & 3, 5, 5, 16 \\
\hline
31 & 4 & 4, 5, 5, 16 \\
\hline
32 & 4 & 5, 5, 5, 16 \\
\hline
33 & 4 & 5, 5, 6, 16 \\
\hline
34 & 4 & 5, 5, 7, 16 \\
\hline
35 & 4 & 5, 6, 7, 16 \\
\hline
36 & 5 & 5, 5, 5, 5, 15 \\
\hline
37 & 5 & 5, 5, 5, 5, 16 \\
\hline
38 & 5 & 5, 5, 5, 6, 16 \\
\hline
39 & 5 & 5, 5, 5, 7, 16 \\
\hline
40 & 5 & 5, 5, 6, 7, 16 \\
\hline
41 & 5 & 5, 5, 7, 7, 16 \\
\hline
42 & 4 & 7, 7, 11, 16 \\
\hline
43 & 5 & 5, 7, 7, 7, 16 \\
\hline
44 & 4 & 10, 11, 11, 11 \\
\hline
45 & 4 & 11, 11, 11, 11 \\
\hline
46 & 4 & 7, 11, 11, 16 \\
\hline
47 & 4 & 8, 11, 11, 16 \\
\hline
48 & 4 & 9, 11, 11, 16 \\
\hline
49 & 4 & 10, 11, 11, 16 \\
\hline
50 & 4 & 11, 11, 11, 16 \\
\hline
51 & 4 & 7, 11, 16, 16 \\
\hline
52 & 5 & 7, 11, 11, 11, 11 \\
\hline
53 & 4 & 9, 11, 16, 16 \\
\hline
54 & 4 & 10, 11, 16, 16 \\
\hline
55 & 4 & 11, 11, 16, 16 \\
\hline
56 & 5 & 11, 11, 11, 11, 11 \\
\hline
57 & 4 & 11, 13, 16, 16 \\
\hline
58 & 4 & 11, 14, 16, 16 \\
\hline
59 & 4 & 11, 15, 16, 16 \\
\hline
60 & 4 & 11, 16, 16, 16 \\
\hline
61 & 5 & 11, 11, 11, 11, 16 \\
\hline
62 & 4 & 13, 16, 16, 16 \\
\hline
63 & 4 & 14, 16, 16, 16 \\
\hline
64 & 4 & 15, 16, 16, 16 \\
\hline
65 & 4 & 16, 16, 16, 16 \\
\hline
66 & 4 & 16, 16, 16, 17 \\
\hline
67 & 4 & 16, 16, 16, 18 \\
\hline
68 & 5 & 11, 11, 13, 16, 16 \\
\hline
69 & 5 & 11, 11, 14, 16, 16 \\
\hline
70 & 5 & 11, 11, 15, 16, 16 \\
\hline
71 & 5 & 11, 11, 16, 16, 16 \\
\hline
72 & 5 & 11, 12, 16, 16, 16 \\
\hline
73 & 5 & 11, 13, 16, 16, 16 \\
\hline
74 & 5 & 11, 14, 16, 16, 16 \\
\hline
75 & 5 & 11, 15, 16, 16, 16 \\
\hline
76 & 5 & 11, 16, 16, 16, 16 \\
\hline
77 & 5 & 12, 16, 16, 16, 16 \\
\hline
78 & 5 & 13, 16, 16, 16, 16 \\
\hline
79 & 5 & 14, 16, 16, 16, 16 \\
\hline
80 & 5 & 15, 16, 16, 16, 16 \\
\hline
81 & 5 & 16, 16, 16, 16, 16 \\
\hline
\caption{Minimum weighted Szeged index trees.}
\label{tab:tree}
\end{longtable}
%\end{multicols}

\begin{figure}[htb]
    \centering
    \includegraphics[width=0.4\textwidth]{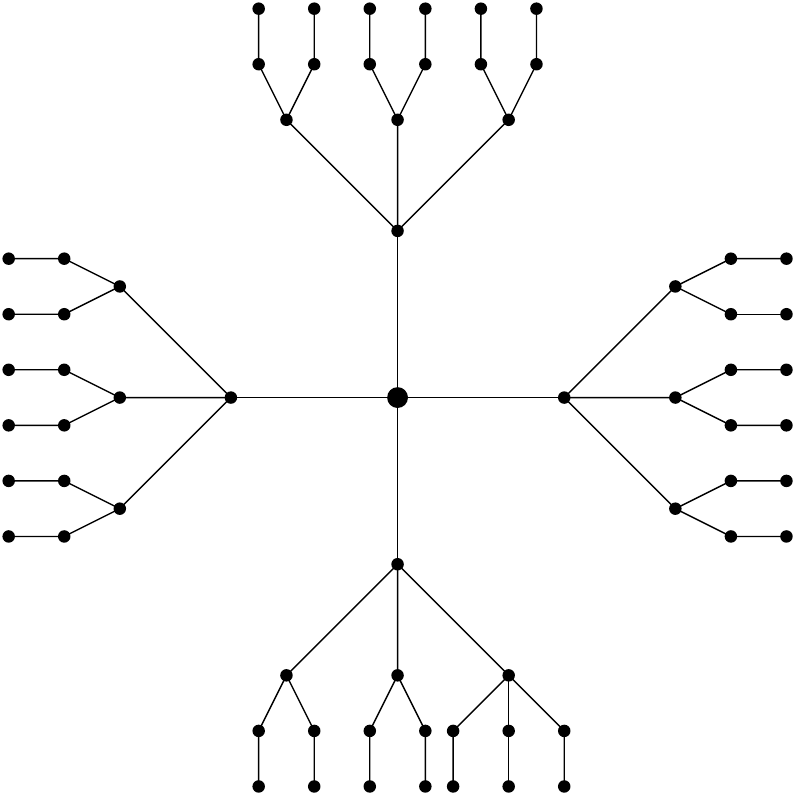}
    \caption{Best weighted Szeged index on 67 vertices.}
    \label{fig:5}
\end{figure}

\subsection{Regular Trees}
By a regular ending branch we mean a minimal (weighted Szeged index) ending branch whose children
are of the same order. In the first 80 minimal ending branches 1, 2, 3, 5, 7, 10, 11, 16, 22, 34, 49, 65
are the regular ones. Based on our observations we have the following conjecture. By a main branch we mean an ending branch that is directly connected to the root of the tree.
\begin{conjecture}
In a minimum weighted Szeged index tree all but at most one main ending branches are regular ending branches.
\end{conjecture}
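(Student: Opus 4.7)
The plan is to proceed by contradiction. Suppose $T$ is a minimum weighted Szeged index tree on $n$ vertices, with root $R$ of maximum degree, and suppose at least two of the main ending branches of $T$, say $B_1$ and $B_2$, are irregular. By Corollary~\ref{cor:end}, each $B_i$ is itself a minimal ending branch on $n_i := |V(B_i)|$ vertices; by hypothesis, neither $n_1$ nor $n_2$ lies in the set $\mathcal{R} = \{1,2,3,5,7,10,11,16,22,34,49,65,\ldots\}$ of regular sizes identified in Section~\ref{sec:computational}. The goal is to modify $T$ so as to strictly reduce the number of irregular main branches without increasing $wSz(T)$, eventually yielding a contradiction with minimality.

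First I would use the recursive identity for $wSz(T_v)$ displayed earlier to extract a closed-form ``cost'' for a main branch of size $s$, namely
$$\mathrm{cost}(s) \;:=\; wSz(B^*_s) + d_R \cdot s \cdot (n-s),$$
where $B^*_s$ denotes a minimal ending branch on $s$ vertices. The quantity $\mathrm{cost}(s)$ is linear in $n$ with coefficients depending only on $s$, and Corollary~\ref{cor:end} implies that the multiset $(n_1,\ldots,n_{d_R})$ of main-branch sizes is a partition of $n-1$ that minimizes $\sum_i \mathrm{cost}(n_i)$ over all partitions into at most $d_R$ parts. The conjecture then reduces to a purely combinatorial statement: at most one part of such an optimal partition may lie outside $\mathcal{R}$.

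The core of the argument would be an exchange lemma: given two irregular parts $n_1, n_2 \notin \mathcal{R}$, one finds a regular $r \in \mathcal{R}$ close to $n_1$ and proves
$$\mathrm{cost}(r) + \mathrm{cost}(n_1+n_2-r) < \mathrm{cost}(n_1) + \mathrm{cost}(n_2).$$
Heuristically, each element of $\mathcal{R}$ arises from a perfectly balanced split at the top of the recursion, which is cheapest; an irregular size forces at least one child of $v$ to have a distinct order and hence to contribute a larger linear-in-$n$ term $d_v n_i(n-n_i)$ in the recursion. Concentrating this penalty into a single branch rather than spreading it across two should be strictly better, because the penalty behaves roughly convexly in the ``departure from regularity'' of the top-level partition of $s-1$.

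The main obstacle is that $\mathcal{R}$ is defined only implicitly, through the very optimization we are trying to control: $s \in \mathcal{R}$ if and only if the optimal partition of $s-1$ is of the form $(m,m,\ldots,m)$ for some regular $m \in \mathcal{R}$ and some degree $d_v$. Consequently, an intrinsic description of $\mathcal{R}$ and a uniform bound on $\mathrm{cost}(s) - \mathrm{cost}(r)$ for $r$ the nearest regular neighbour of $s$ both seem subtle; Table~\ref{tab:branch} suggests that regular sizes grow roughly geometrically with ratio $3$ (matching the empirical preference for branches of degree $3$, with occasional jumps to $4$), but I do not yet see a clean monotonicity or convexity statement that makes the exchange go through uniformly in $n_1$ and $n_2$. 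I therefore expect the technical heart of the proof to be either a careful case analysis propagating up the regular hierarchy, or else the identification of a global ``defect functional'' on partitions of $n-1$ that strictly decreases under the exchange move above and vanishes precisely on partitions with at most one irregular part.
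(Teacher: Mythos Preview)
The statement you are attempting to prove is presented in the paper as a \emph{conjecture}, not a theorem: the authors offer no proof, only computational evidence from Tables~\ref{tab:branch} and~\ref{tab:tree}. So there is no ``paper's own proof'' against which to compare your attempt.

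As for the proposal itself, it is an outline of a strategy rather than a proof, and you correctly identify its central gap. The exchange inequality
\[
\mathrm{cost}(r) + \mathrm{cost}(n_1+n_2-r) < \mathrm{cost}(n_1) + \mathrm{cost}(n_2)
\]
is the entire content of the conjecture once the problem is recast in your cost formulation, and you have not established it. The heuristic that ``the penalty behaves roughly convexly in the departure from regularity'' is suggestive but unsubstantiated: the function $s \mapsto wSz(B^*_s)$ is piecewise linear in $n$ with slopes and intercepts that change each time the optimal top-level partition of $s-1$ changes, and there is no a~priori reason these jumps arrange themselves convexly. Indeed, Table~\ref{tab:branch} shows that the threshold values of $n$ at which a given branch becomes optimal are not monotone in $n_v$ (e.g.\ $n_v=21$ requires $n\ge 70$ while $n_v=22$ requires only $n\ge 45$), which warns against any naive convexity assumption.

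There is also a circularity you flag but perhaps underweight: the set $\mathcal{R}$ of regular sizes is defined in terms of optimality of ending branches, which is precisely what you are trying to control. Any argument that assumes structural properties of $\mathcal{R}$ (geometric growth, degree-$3$ dominance, etc.) is assuming consequences of the conjecture or of stronger unproved statements. Until one has an independent characterization of $\mathcal{R}$, or at least sharp two-sided bounds on $wSz(B^*_s)$ uniform in $s$, the exchange argument cannot close. Your proposal is a reasonable research plan, but at present it is not a proof.
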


The same definition applies to regular trees and the first regular minimal trees are
1, 2, 3, 5, 7, 9, 16, 21, 26, 45, 56, 65 and 81.

The simplified degree sequence of a regular ending branch is the degree sequence of vertices on a path from the root of the ending branch to a leaf. For example, the simplified degree sequence of a minimal ending branch of order 65 is 4, 3, 2, 1.

Our calculation shows that a minimal ending branch of order 326 (again our expectation) is not a regular ending branch with simplified degree sequence of 5, 4, 3, 2, 1. Actually an ending branch on 326 vertices with 3 children of order 103, 103, 119 works better than the regular ending branch.

We note that there seem to be no vertices of degree greater than 6 in the optimal trees.
In order to understand this phenomenon, we offer the following thought experiment.
We will calculate an approximation of the weighted Szeged index for complete $k$-ary
trees with a very high number $n$ of vertices.

Note that the root has degree $k$, while the other vertices have degree $k+1$. We will ignore this
complication in order to simplify the expressions, and treat the every edge as having degree sum
$2k+2$, so we can just calculate the ordinary Szeged index and multiply it by $2k+2$. The Szeged
index of a complete $k$-ary tree is estimated as follows.

Each edge from the root to one of its $k$ children contributes the product $\frac{n-1}{k} \left( n - \frac{n-1}{k} \right)$ because
the child has $\frac{n-1}{k}$ descendants. In this expression the $n^2$ term dominates, since we assumed that
$n$ is very large; its term is $\frac{k-1}{k^2} n^2$. There are $k$ edges like this, so their overall contribution to
the Szeged index is $\frac{k-1}{k} n^2 + o(n^2)$. Similarly, $k^2$ edges between the children and grandchildren
of the root contribute collectively $k^2 \left( \frac{n-k-1}{k^2} \right) \left( n - \frac{n-k-1}{k^2} \right)$ which equals
$\frac{k^2 - 1}{k^2} n^2 + o(n^2)$. A similar calculation for the remaining levels reveals that the Szeged
index is $n^2 \left( \frac{k-1}{k} + \frac{k^2 - 1}{k^2} + \frac{k^3 - 1}{k^3} + \dots \right) + o(n^2)$. Since there are $\log_k n$
levels, we can say roughly that the Szeged index of a complete $k$-ary tree is about $n^2 \log_k n$, and the
weighted Szeged index is about $n^2 (2k+2) \log_k n$. The expression can be analyzed, but since it only represents a rough approximation we can just look at a large value. When $n=10^6$, the expression
appears to be minimized around $k=4$; we feel this may explain why we don't see vertices of degree more than
6. In any event, a large $k$, say $k=10$, would be disadvantageous. We also see from the calculation that the
contributions of the lower levels of a complete $k$-ary tree are slightly  increasing as we go down the tree, and
perhaps this explains why the degrees in our optimal trees are decreasing. In large optimal
trees we have found that the degrees are decreasing just enough to make contributions of all levels roughly the
same (with the exception of the bottom three or four levels).

\section{Conclusions}
\label{sec:conc}

In view of Conjecture \ref{min-tree}, it is a good idea to
understand the structure of minimum weighted Szeged index
trees.   Even if it turns out to be false, knowing the
structure of such trees could give us some insight to
understand the structure of minimum weighted Szeged index
graphs in the general case.

In this work we introduced the concept of ending branch,
which we used to analyze the structure of minimum weight
Szeged index trees in a recursive fashion.   Our observations
were useful to computationally construct the trees on at most
81 vertices, extending the list of 25 trees given in
\cite{jan}.

Based on our results and experimental observations,
we finalize the section with some conjectures that,
if true, can give insights on the structure of minimum
weighted Szeged index trees. Also, they represent well
determined future lines of work that might be explored.
\begin{conjecture}
In a minimum weighted Szeged index tree, the degree sequence from the root to any leaf is non-increasing.
\end{conjecture}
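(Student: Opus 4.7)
The plan is to use the ending-branch machinery of Section~\ref{sec:ending} to reduce the conjecture to a purely local statement. Since $R$ is chosen to be a vertex of maximum degree, $d_R \geq d_u$ for its neighbour $u$ on any root-to-leaf path; the substance of the conjecture lies deeper. For any vertex $v \neq R$ on such a path, let $B_v$ denote the ending branch rooted at $v$. By Corollary~\ref{cor:end}, $B_v$ is a minimal ending branch, and the next vertex on the path is a child of $v$ inside $B_v$. Hence the conjecture is equivalent to the following local property, which I call~(P): \emph{in every minimal ending branch, the root's degree (measured in the ambient tree) is at least the degree of each of its children.} I would prove~(P) by induction on the size $N$ of the branch; the base cases for small $N$ are directly verified from Table~\ref{tab:branch} and the earlier calculations in Section~\ref{sec:ending}. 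For the inductive step, let $B$ be a minimal ending branch on $N$ vertices rooted at $v$ with children $x_1, \ldots, x_k$ of subtree sizes $n_1, \ldots, n_k$. Applying Corollary~\ref{cor:end} inside $B$, each $T_{x_i}$ is itself a minimal ending branch of size $n_i$, so~(P) holds inside each $T_{x_i}$ by induction, and it remains to establish $d_v \geq d_{x_i}$ for every~$i$.

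Suppose for contradiction that $d_{x_1} > d_v$, so $x_1$ has $k_1 \geq k+1$ children while $v$ has $k$. I would construct a competitor branch $B'$ of the same size by detaching one of $x_1$'s children, say $y_1$ with subtree of size $m_1$, and reattaching it as a new child of $v$, thereby increasing $d_v$ by one and decreasing $d_{x_1}$ by one. An edge-by-edge accounting of $\Delta := wSz(B') - wSz(B)$ yields
\begin{align*}
\Delta = {}& N(n - N) + \sum_{i=2}^{k} n_i(n - n_i) - \sum_{j=2}^{k_1} m_j(n - m_j) \\
& {} - (k + k_1 + 2)\, m_1\left(n - 2n_1 + m_1\right) + (k + 1 - k_1)\, m_1(n - m_1),
\end{align*}
where the first two terms record the increase in $d_v$ on the half-edge to the parent and on the edges to the other children, the third records the decrease in $d_{x_1}$ on the edges that remain at $x_1$, the fourth captures the shrinking of the product on $vx_1$ when $y_1$ departs, and the last (which is $\leq 0$ since $k_1 \geq k+1$) is the net effect on the transferred edge.

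The main obstacle is to show that $\Delta < 0$ regardless of the precise subtree sizes. The signs are mixed, and the positive half-edge term $N(n-N)$ can be of order $Nn$ when $n \gg N$, while the negative ``shrinking'' contribution is of order $m_1 n$; hence one must choose the transferred child so that $m_1$ is large enough to dominate. My plan is two-fold: first, invoke the inductive minimality of $T_{x_1}$, which should force $x_1$'s children to be reasonably balanced and therefore guarantee some $y_j$ with $m_j$ of a constant fraction of $n_1$; second, split into cases according to whether $k = 1$ (a degree-two ``pass-through'' at $v$, which calls for a contraction-and-leaf-relocation argument) or $k \geq 2$, and within each case treat the borderline $k_1 = k + 1$ separately, since there the transferred-edge term vanishes and all the negativity must come from the shrinking of the $vx_1$ product. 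If the single-child exchange fails in some corner of parameter space, the fallback is either to swap two children simultaneously or to promote $x_1$ into $v$'s position and redistribute the combined children optimally between the two vertices; pinning down which exchange always succeeds is where I expect the bulk of the technical work to lie.
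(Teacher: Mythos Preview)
The paper does not prove this statement: it appears in Section~\ref{sec:conc} explicitly labelled as a conjecture, alongside two others, with no argument offered. There is therefore no proof in the paper to compare your proposal against; you are attempting to resolve what the authors present as an open problem.

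As a plan of attack, your reduction to the local property~(P) is sound, and your expression for $\Delta$ after a single-child transfer checks out. But the proposal is, by your own admission, incomplete: the central step of showing $\Delta<0$ in every configuration is deferred as ``the bulk of the technical work.'' The obstacle you flag is genuine. For instance, with $k=1$, $k_1=2$, and the smaller child transferred ($m_1=1$, so $m_2=N-3$, $n_1=N-1$), your formula gives $\Delta = -2n + 4N - 6$, which is positive whenever $n < 2N-3$; one must then transfer the larger child instead, and in general a uniform argument would require an a~priori lower bound on $\max_j m_j$ in terms of $n_1$. Nothing in the paper provides such a balance lemma for minimal ending branches, and your fallback moves (double swaps, promoting $x_1$ into $v$'s slot) each open further case distinctions with no evident closure. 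Until some exchange is shown to succeed in every configuration, what you have is a heuristic outline, and the statement retains its status as a conjecture.
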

\begin{conjecture}
Vertices of degree 1 are attached to vertices of degree at most 3.
\end{conjecture}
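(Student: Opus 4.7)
I would argue by contradiction. Suppose $T$ is a minimum weighted Szeged index tree containing a leaf $l$ adjacent to some vertex $v$ of degree $d \geq 4$, and let $B$ denote the ending branch rooted at $v$. By Corollary~\ref{cor:end}, $B$ is a minimal ending branch of its order, so it suffices to exhibit an ending branch $B'$ of the same order with strictly smaller weighted Szeged index. Throughout, I would use the recursive expression $wSz(T_v) = d_v\, n_v(n-n_v) + d_v\sum_i n_i(n-n_i) + \sum_i wSz(T_{x_i})$ from the paper, write $wSz(B') - wSz(B)$ as a polynomial in $n$, and show it is negative for $n$ large enough.

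I would split the argument on the number of leaf children of $v$. In the first case, $v$ has at least two leaf children $l_1, l_2$, and I would detach $l_2$ from $v$ and reattach it as the sole child of $l_1$, which preserves the order of $B$, lowers the degree of $v$ to $d-1$, and turns $l_1$ into a degree-$2$ vertex. A routine expansion of the recursion shows that $wSz(B')-wSz(B)$ is linear in $n$ with coefficient $-2M$, where $M$ is the total order of the non-leaf children's subtrees of $v$; when $v$ has a non-leaf child this is strictly negative, and when every child of $v$ is a leaf the change simplifies to $(6-2d)\,n + (d^2-d-8)$, again strictly negative for $d \geq 4$ and $n$ large.

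In the second case, $l$ is the unique leaf child of $v$, so $v$ has at least two non-leaf children $x_1, \ldots, x_{d-2}$. Letting $x$ be one of them with smallest subtree order $m \geq 2$, I would take $B'$ to be obtained by deleting $l$ and replacing the subtree at $x$ with the optimal ending branch of order $m+1$; this preserves the order of $B$ and drops $d_v$ to $d-1$. The recursion then gives an expression for $wSz(B')-wSz(B)$ whose sign depends on the one-step growth $S_{\text{opt}}(m+1) - S_{\text{opt}}(m)$ of the minimum ending-branch cost as a function of order. To bound this growth from above I would produce a concrete order-$(m+1)$ ending branch by grafting one additional leaf onto a vertex of degree at most $2$ inside the optimal order-$m$ branch; such a vertex can be exhibited directly in every row of Table~\ref{tab:branch}, and for general $m$ it is available by induction on $m$ using the present conjecture as hypothesis.

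The principal obstacle is this second case: driving the inequality below zero for all admissible $(d,m)$ and all sufficiently large $n$ requires tight enough control on $S_{\text{opt}}(m+1) - S_{\text{opt}}(m)$, which in turn demands structural information about optimal ending branches of each intermediate order. A clean proof would likely proceed by strong induction on the order of the ending branch, strengthening the statement so as to establish simultaneously both the present conjecture and the monotone-degree-sequence conjecture that precedes it, so that at each inductive step the existence and location of a low-degree vertex in the optimal smaller branch is guaranteed and the surgery in the second case can be carried out with an effective bound.
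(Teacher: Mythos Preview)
The paper does not prove this statement: it is listed in Section~\ref{sec:conc} as an open conjecture supported only by the computational data in Tables~\ref{tab:branch} and~\ref{tab:tree}. There is therefore no proof in the paper to compare your proposal against.

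As for the proposal itself, it is not a proof but a strategy with gaps you already identify. Your Case~2 hinges on an upper bound for the increment $S_{\text{opt}}(m+1)-S_{\text{opt}}(m)$ that you do not establish; you propose obtaining it by grafting a leaf onto a vertex of degree at most~$2$ inside the optimal order-$m$ branch, but the existence of such a vertex in \emph{every} optimal branch is precisely part of what you are trying to prove, and your appeal to ``induction on $m$ using the present conjecture as hypothesis'' is circular unless you carry a strictly stronger inductive statement. You also repeatedly restrict to ``$n$ large enough,'' whereas the conjecture concerns all $n$; the small-$n$ cases would need a separate (presumably computational) treatment. Finally, a minor structural gap: you pass immediately to the ending branch rooted at $v$ and invoke Corollary~\ref{cor:end}, but if $v=R$ (the chosen highest-degree root) then $v$ is not the root of any ending branch in the paper's sense, so that reduction does not apply as stated; the local swap in Case~1 can be run directly on $T$ to sidestep this, but Case~2 as written genuinely uses optimality of the sub-branch and would need rephrasing.
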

\begin{conjecture}
There are no vertices of degree greater than 6 in a minimum weighted Szeged index tree.
\end{conjecture}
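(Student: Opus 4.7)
The plan is to argue by contradiction. Suppose $T$ is a minimum weighted Szeged index tree on $n$ vertices containing a vertex of degree at least $7$. For this strategy I would first want to establish (or assume as a working hypothesis) the earlier conjecture that the degree sequence is non-increasing along root-to-leaf paths, which lets me concentrate on the root. Thus let $R$ be the root of $T$ with $d_R = k \geq 7$, and let $x_1,\ldots,x_k$ be its children, rooting minimal ending branches $B_1,\ldots,B_k$ of sizes $n_1 \leq n_2 \leq \cdots \leq n_k$. The goal is to produce a tree $T'$ on $n$ vertices with $wSz(T') < wSz(T)$ by absorbing $B_1$ into $B_2$, which reduces the root degree to $k-1$.

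Concretely, I would delete the edge $Rx_1$ and re-attach the entire branch $B_1$ to a carefully chosen vertex $w$ inside $B_2$, then, by Corollary \ref{cor:end}, further replace the combined branch of size $n_1+n_2$ with the minimal ending branch of that size. I would compare $wSz(T)$ and $wSz(T')$ using the formula
$$wSz(T_R) = \sum_{i=1}^{k}(d_R+d_{x_i})\,n_i(n-n_i) + \sum_{i=1}^{k} wSz(B_i).$$
The drop from $d_R = k$ to $d_R = k-1$ saves $\sum_{i \geq 2} n_i(n-n_i)$ in the first sum, while the new merged branch contributes a single term $(k-1+d_{y})(n_1+n_2)(n-n_1-n_2) + wSz(B_{12})$ replacing two previous contributions. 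After cancellation, one must show the net effect is strictly negative.

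The key inequality to establish is that the saving from the root's degree reduction dominates the merger cost. Here the heuristic of Section \ref{sec:computational} becomes the main guide: the weighted Szeged index of a complete $k$-ary tree scales like $2(k+1)n^{2}/\log k$, which is strictly increasing in $k$ once $k \geq 4$. I would aim to convert this into a rigorous lower bound of the form $wSz(B) \geq c(k)\,m^{2}$ for any ending branch $B$ of $m$ vertices whose root has degree $k$, with $c(k)$ strictly increasing in $k$ for $k \geq 4$. Combining such a bound with Table \ref{tab:branch} for small branches should yield a contradiction whenever $k \geq 7$.

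The hard part will be making the local swap work \emph{uniformly}: in principle the saving from reducing $d_R$ could be offset if the minimal branch of size $n_1+n_2$ happens to have its own root degree forced upward, which would leak the gain back in. Controlling this requires choosing the attachment point $w$ inductively and ruling out pathological configurations for all feasible $(n_1,n_2)$. A secondary obstacle is bridging the asymptotic regime, where the $k$-ary heuristic is sharp, with moderate $n$: the computational verification covers $n \leq 81$, but one would need an explicit threshold $n_0$ and tight bounds on $wSz$ of ending branches of intermediate sizes to close the gap between the tabulated cases and the asymptotics.
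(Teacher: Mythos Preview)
The paper does not prove this statement: it is posed as an open \emph{conjecture}, supported only by the computational data in Tables~\ref{tab:branch}--\ref{tab:tree} and by the heuristic $k$-ary calculation in Section~\ref{sec:computational}. There is no proof in the paper for you to compare against.

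Your proposal is therefore not a comparison target but an attempt to settle an open problem, and as written it is a strategy outline rather than a proof. Two concrete gaps stand out. First, you explicitly assume the ``degrees non-increasing along root-to-leaf paths'' conjecture in order to localize the high-degree vertex at the root; that statement is itself unproven in the paper, so your argument would at best be conditional. Second, the central step---showing that the saving $\sum_{i\ge 2} n_i(n-n_i)$ from lowering $d_R$ strictly dominates the cost of replacing $B_1,B_2$ by the optimal branch of size $n_1+n_2$---is never carried out. You note correctly that the optimal merged branch might have its own root degree forced up, feeding the loss back in, but you give no mechanism to bound $wSz$ of a minimal ending branch from below by a function $c(k)m^2$ that is monotone in $k$; the paper's $k$-ary computation is asymptotic and ignores lower-order terms, so it cannot be quoted as such a bound. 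Until those two pieces are supplied, the argument does not establish the conjecture.
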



\begin{thebibliography}{99}
\bibitem{bondy2008}
	J.A.~Bondy and U.S.R~Murty,
	Graph Theory,
	Springer, Berlin, 2008.

\bibitem{jan}
J. Boka, B. Furtulab, N. Jedli\v{c}kov\'a,
R. \v{S}krekovskid,
MATCH Commun. Math. Comput. Chem. 82 (2019) 93--109.

\bibitem{ref6} M. V. Diudea, I. Gutman, L. J\"antschi,
Molecular Topology, Nova, Huntington, 2002.

\bibitem{ref2}
E. Estrada,
Characterization of 3D molecular structure,
Chem. Phys. Lett. 319 (2000) 713--718.

\bibitem{ref3} I. Gutman, B. Zhou,
Laplacian energy of a graph,
Linear Algebra Appl. 414 (2006) 29--37.

\bibitem{ABCJCTB}
S. A. Hosseini, B. Mohar, M. B. Ahmadi, The evolution of the structure of ABC-minimal trees, arXiv e-prints, page arXiv:1804.02098, 2018.

\bibitem{krag_tree}
S. A. Hosseini, M. B. Ahmadi, I. Gutman,
Kragujevac trees with minimal atom-bond
connectivity index,
MATCH Commun. Math. Comput. Chem. 71 (2014) 5--20.

\bibitem{ilic}
A. Ili\'{c}, N. Milosavljevi\'{c}, The weighted vertex PI index, Math. Comput. Modell. 57
(2013) 623--631.

\bibitem{ref4} G. Indulal, I. Gutman, A. Vijaykumar,
On distance energy of graphs,
MATCH Commun. Math. Comput. Chem. 60 (2008) 461--472.

\bibitem{ref5} J. Liu, B. Liu,
A Laplacian energy like invariant of a graph,
MATCH Commun. Math. Comput. Chem. 59 (2008) 355--372.

\bibitem{ref10}
S. Nagarajan, K. Pattabiraman, M. Chandrasekharan, Weighted Szeged index of
generalized hierarchical product of graphs, Gen. Math. Notes 23 (2014) 85--95.

\bibitem{ref11} K. Pattabiraman, P. Kandan, Weighted Szeged indices of some graph operations,
Trans. Comb. 5 (2016) 25--35.

\bibitem{ref12} K. Pattabiraman, P. Kandan, Weighted Szeged index of graphs, Bull. Int. Math.
Virtual Inst. 8 (2018) 11--19.

\bibitem{ref1}
R. Todeschini, V. Consonni,
Molecular descriptors for chemoinformatics, Wiley-VCH, Weinheim, 2009.

\end{thebibliography}
\end{document}